\def\id{\mathcal J}
\def\R{\mathbb{R}}
\def\C{{\mathbb C}}
\def\N{{\mathbb N}}
\def\<{\langle}
\def\>{\rangle}
\newcommand{\bel}{\begin{equation}\label}
\newcommand{\ee}{\end{equation}}
      \newtheorem{theorem}{Theorem}
       \newtheorem{proposition}[theorem]{Proposition}
       \newtheorem{lemma}[theorem]{Lemma}
       \newtheorem{remark}{Remark}
\theoremstyle{definition}
\title{The Lukacs theorem and the Olkin-Baker equation}
\author{Roman Ger\thanks{Instytut Matematyki, Uniwesytet \'Sl\k{a}ski, Katowice, Poland,\newline e-mail: roman.ger@us.edu.pl},\hspace{1mm} Jolanta Misiewicz\thanks{Wydzia{\l} Matematyki i Nauk Informacyjnych, Politechnika Warszawska, Warszawa, Poland,\newline e-mail: J.Misiewicz@mini.pw.edu.pl},\hspace{1mm} Jacek Weso\l owski\thanks{Wydzia{\l} Matematyki i Nauk Informacyjnych, Politechnika Warszawska, Warszawa, Poland,\newline e-mail: wesolo@mini.pw.edu.pl}}
\begin{document}
\maketitle
\begin{abstract}
The Olkin-Baker functional equation, except of being studied inside the theory of functional equations, is closely related to the celebrated Lukacs characterization of the gamma distribution. Its deeper understanding in the case of measurable unknown functions is essential to settle a challenging question of multivariate extensions of the Lukacs theorem. In this paper, first, we provide a new approach to the additive Olkin-Baker equation which holds almost everywhere on $(0,\infty)^2$ (with respect to the Lebesgue measure on $\R^2$) under measurability assumption. Second, this new approach is adapted to the case when unknown functions are allowed to be non-measurable and the complete solution is given in such a general case. Third, the Olkin-Baker equation holding outside of a set from proper linearly invariant ideal of subsets of $\R^2$ is considered.
\end{abstract}

\section{Introduction}
One of the most classical results of characterizations of
probability distributions is the Lukacs theorem, which states that
{\em if $X$ and $Y$ are positive, non-degenerate and independent
random variables such that $U=X+Y$ and $V=X/(X+Y)$ are also
independent then $X$ and $Y$ have gamma distributions with the
same scale and possibly different shape parameters}.

The proof given in Lukacs (1955) exploits the approach through a
differential equation for the Laplace transforms. This technique
was successfully developed for matrix variate versions of the
Lukacs theorem in Olkin and Rubin (1962) and Casalis and Letac
(1996), where the Wishart distribution was characterized through
independence of $U=X+Y$ and $V=w(X+Y)\:X\:w^T(X+Y)$, where $w$ is
so called division algorithm, that is $w(a)aw^T(a)=I$ for any
positive definite matrix $a$. However an additional strong
assumption of invariance of the distribution of $V$ by a group of
authomorphisms of the cone of positive definite matrices was
additionally imposed. To avoid this restrictive invariance
condition Bobecka and Weso\l owski (2002) designed a new approach
to Lukacs characterization based on densities. Assuming that the
densities are strictly positive on the cone of positive definite
matrices and that they are twice differentiable they proved the
characterization of Wishart distributions through independence of
$U=X+Y$ and $V=(X+Y)^{-1/2}X(X+Y)^{-1/2}$, where $a^{1/2}$ denotes
the unique symmetric root of a positive definite matrix. The proof
was based on solutions of two functional equations for real
functions defined on the cone of positive definite matrices.
Exploiting the same technique, Hassairi, Lajmi and Zine (2008)
with the same technical assumptions on densities proved that
independence of $U=X+Y$ and $V=[W(X+Y)]^{-1}X[W^T(X+Y)]^{-1}$,
where $W(X+Y)\:W^T(X+Y)=X+Y$ is the Cholesky decomposition of $X+Y$,
that is $W(X+Y)$ is an upper triangular matrix, characterizes a
wider family of distributions called Riesz-Wishart (further
development in the case of this division algorithm, still under
twice differentiable densities, was obtained for homogenous cones
in Boutoria (2005, 2009) and Boutoria, Hassairi, Massam (2011)). This fact shows that the invariance property
assumed in Olkin and Rubin (1962) and Casalis and Letac (1996) was rather
not of technical character. It appears that the Lukacs
independence condition may define different "gamma" distributions
depending on the division algorithm used for designing the
variable $V$.

It is somewhat disappointing that these results depend so much
on smoothness conditions for densities. Even if the condition of
existence of strictly positive densities can be easier accepted,
twice differentiability of the densities seems to be too much.
Therefore it is of great interest to get rid of these technical
restrictions if it is possible. Moving in this direction, the first
of the functional equations from Bobecka and Weso\l owski (2002),
the equation of ratios, has been recently solved in Weso\l owski
(2007) for functions defined on the Lorentz cone. Studying the Lukacs theorem on the Lorentz cone ${\mathcal
V}=\{(x_0,x_1,\ldots,x_n)\in\R^{n+1}:\:x_0>\sqrt{x_1^2+\ldots+x_n^2}\}$ is the right approach,
if one wants to extend the Lukacs result to the $\R^{n+1}$
setting. Another trial in this direction given in Bobecka and
Weso\l owski (2004) through a coordinate-wise version of
the Lukacs independence condition led to a very special
distribution of independent sub-vectors, components of which are
scaled versions of a univariate gamma variable.

Therefore, there is a good reason for looking for new proofs of the classical Lukacs result in univariate case. Except of being of interest on its own it may give a new insight into
what can be done in the multivariate setting, as explained above. The basic functional equation related to this issue, when one considers relations which have to hold for  densities, is the celebrated Olkin-Baker equation
\bel{ob}
f(x)g(y)=p(x+y)q(x/y)
\ee
with unknown functions $f,\,g,\,p,\,q:(0,\infty)\to(0,\infty)$. The problem of solving this equation was posed by Olkin (1975). Its general solution under the assumption that it holds for all $(x,y)\in(0,\infty)^2$ was given by Baker (1976). The proof was based on two additional lemmas and was quite complicated. The equation \eqref{ob} was also analyzed in Lajko (1979), who applied the approach developed in Dar\'oczy, Lajk\'o and Sz\'ekelyhidi (1979) allowing to use known methods for the Jensen functional equation. Recently, M\'esz\'aros (2010) solved this equation assuming that it is satisfied $l_2$-almost everywhere on $(0,\infty)^2$. (Throughout this paper $l_n$ denotes the Lebesgue measure in $\R^n$). Her approach was based on J\'arai's regularization technique (see J\'arai, 1995 and 2005), which actually allowed to reduce the problem to the Olkin-Baker equation with unknown continuous functions for which the equation \eqref{ob} holds everywhere. Consequently, the final result followed directly from the original Baker solution. More recently, Lajk\'o and M\'esz\'aros (2012), using another method developed by J\'arai (see also J\'arai, 2005), showed that it suffices to assume that unknown nonnegative functions in \eqref{ob} are positive on some sets of positive Lebesgue measure.

The motivations for this paper are two-fold: coming from probability - we seek new approach to the Olkin-Baker equation which may lead to the matrix-variate version of the Lukacs theorem for the Wishart distribution; coming from the functional equations theory - we seek general solution of the Olkin-Baker equation holding almost everywhere, or even more generally, holding outside a set from proper linearly invariant ideal of subsets of $\R^2$. We develop a new method of solution of the equation \eqref{ob} holding $l_2$-almost everywhere in $(0,\infty)^2$ first under measurability assumption for unknown functions. Further this method is extended to cover the case of general solution of \eqref{ob} holding $l_2$-almost everywhere in $(0,\infty)^2$ in terms of additive and logarithmic type functions. In the course of the argument we introduce a notion of "semi-constant" function. Such a function $f:(0,\infty)\to\R$ satisfies $f(xy)=f(y)$ for $l_1$-almost every $x>0$ and any $y\in(0,\infty)\setminus E_x$, where $l_1(E_x)=0$. The rest of the paper is organized as follows: In Section 2 we explain the connection between the Lukacs characterization and the Olkin-Baker equation. Section 3 is devoted to a new approach to the Olkin-Baker equation under assumptions that unknown functions are measurable and the equation is satisfied $l_2$-almost everywhere on $(0,\infty)^2$. In Section 4, using the concept of semi-constant function we derive general solution of the Olkin-Baker equation when it holds $l_2$-almost everywhere in $(0,\infty)^2$ and no regularity assumptions on unknown functions are imposed. In Section 5 we show how to adopt the reasoning of previous cases to an abstract setting when the equation is  satisfied outside of a set belonging to proper linearly invariant ideal of subsets of $\R^2$.

\section{Lukacs theorem with densities}

The result we formulate below is a special case of the Lukacs
theorem and as such is well known. The main novelty is its rather elementary proof based on densities and on a new approach to the Olkin-Baker equation. This approach has recently proved to be useful in multivariate extensions - it has been used e.g. in the proof of the Lukacs theorem on the Lorentz cone in Ko\l odziejek (2010).

\begin{theorem}\label{luk}
Let $X$ and $Y$ be independent random variables having strictly
positive densities defined on $(0,\infty)$. If $U=X+Y$ and
$V=X/(X+Y)$ are also independent, then there exist positive
numbers $p,q,a$ such that $X\sim G(p,a)$ and $Y\sim G(q,a)$, where
$G(r,c)$ denotes the gamma distribution with the shape parameter
$r>0$ and the scale parameter $c>0$, which is defined by the
density
$$
f(x)=\frac{c^r}{\Gamma(r)}x^{r-1}e^{-cx}I_{(0,\infty)}(x)\;.
$$
\end{theorem}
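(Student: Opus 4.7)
\emph{Setup via densities.} The plan is to turn the independence hypothesis into an instance of the Olkin-Baker equation \eqref{ob} under measurability and strict positivity, and then invoke the solution of that equation developed later in the paper. Let $f_X,f_Y$ denote the given strictly positive densities. The mapping $(x,y)\mapsto(u,v)=(x+y,\,x/(x+y))$ is a $C^1$-diffeomorphism of $(0,\infty)^2$ onto $(0,\infty)\times(0,1)$ with inverse $x=uv$, $y=u(1-v)$ and Jacobian $u$. Hence $(U,V)$ has density $(u,v)\mapsto u\,f_X(uv)\,f_Y(u(1-v))$, and the assumed independence of $U$ and $V$ means that this equals $f_U(u)f_V(v)$ for $l_2$-a.e.\ $(u,v)$, where $f_U,f_V$ are measurable versions of the marginal densities --- both strictly positive in view of the strict positivity of $f_X,f_Y$.

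\emph{Reduction to Olkin-Baker.} Transforming back via $x=uv$, $y=u(1-v)$ and noting that $v=x/(x+y)=(x/y)/(1+x/y)$, I would set
\[
p(u):=\frac{f_U(u)}{u},\qquad q(t):=f_V\!\Bigl(\tfrac{t}{1+t}\Bigr),
\]
so that the previous identity is rewritten as
\[
f_X(x)\,f_Y(y)=p(x+y)\,q(x/y)\qquad l_2\text{-a.e.\ on }(0,\infty)^2,
\]
which is precisely \eqref{ob} with all four unknowns measurable and strictly positive --- exactly the framework of Section 3.

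\emph{Identification and main difficulty.} Applying the solution of the measurable $l_2$-a.e.\ Olkin-Baker equation (the content of Section 3) then forces $f_X$ and $f_Y$ to coincide almost everywhere with functions of the form $A\,x^{p-1}e^{-ax}$ and $B\,y^{q-1}e^{-ay}$ for some constants $A,B>0$, $p,q\in\R$ and $a\in\R$. Integrability of these on $(0,\infty)$ forces $a>0$ and $p,q>0$, and normalization then gives $A=a^p/\Gamma(p)$, $B=a^q/\Gamma(q)$, so that $X\sim G(p,a)$ and $Y\sim G(q,a)$. Thus the genuine work of the proof does \emph{not} lie in Theorem \ref{luk} itself but in the $l_2$-a.e.\ measurable solution of \eqref{ob}; the only care required at this stage is to check that \eqref{ob} holds $l_2$-a.e.\ on $(0,\infty)^2$ (and not merely almost surely under the law of $(X,Y)$) with strictly positive measurable unknowns, both of which follow at once from the change-of-variables formula and the strict positivity of $f_X,f_Y$.
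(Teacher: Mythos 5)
Your proposal is correct and takes essentially the same route as the paper: both convert the independence of $U$ and $V$, via the change of variables $(x,y)\mapsto(x+y,\,x/(x+y))$ with Jacobian $x+y$, into the Olkin--Baker equation holding $l_2$-a.e.\ with measurable, strictly positive unknowns, then invoke the Section~3 solution (Proposition~\ref{eq1}) and conclude via integrability and normalization. The only cosmetic difference is that you retain the multiplicative form \eqref{ob}, whereas the paper immediately takes logarithms to reach the additive form \eqref{abcd}, which is in any case the step required to apply Proposition~\ref{eq1}.
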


\begin{proof}
It is standard to see that the independence condition can be
equivalently rewritten as follows: there exists a set $M\subset
(0,\infty)^2$ such that $l_2(M)=0$ and
$$
f_U(x+y)f_V\left(\frac{x}{x+y}\right)=(x+y)f_X(x)f_Y(y)\quad\forall\:(x,y)\in(0,\infty)^2\setminus M.
$$
Taking logarithms of both sides we arrive at
\begin{equation}\label{abcd}
a(x)+b(y)=c(x+y)+d(x/y)\quad\forall\:(x,y)\in(0,\infty)^2\setminus
M,
\end{equation}
where $a=\log(f_X)$, $b=\log(f_Y)$, $c(x)=\log(f_U(x))-\log(x)$ and $d(x)=\log[f_V(x/(1+x))]$, $x\in(0,\infty)$.

Now, using the result of Prop. \ref{eq1} below we get
$$
f_X\propto x^{\varkappa_1}e^{-\lambda
x}I_{(0,\infty)}(x)\quad\mbox{and}\quad f_Y(y)\propto
y^{\varkappa_2}e^{-\lambda y}I_{(0,\infty)}(y),
$$
where $\propto$ means equality up to a multiplicative constant.
Since $f_X$ and $f_Y$, as densities, are integrable on
$(0,\infty)$, we have that $p_i=\varkappa_i+1>0$ and $\lambda>0$.
\end{proof}

\section{Almost everywhere Olkin-Baker functional equation under measurability}

The main result of this section is the solution of the equation
(\ref{abcd}) holding $l_2$-almost everywhere in $(0,\infty)^2$ under measurability assumptions through a new method which is neither based on Jensen equation (as in Lajk\'o, 1979) nor on J\'arai's regularization techniques (as in M\'esz\'aros, 2010). In Section 4, this method will be extended to  the general Olkin-Baker equation holding $l_2$-almost everywhere in $(0,\infty)^2$ with no regularity assumptions whatsoever on the unknown functions.

\begin{proposition}\label{eq1}
Let $a$, $b$, $c$ and $d$ be real Borel measurable functions on
$(0,\infty)$. Assume that there exists a measurable set
$M\subset(0,\infty)^2$ such that $l_2(M)=0$ and
\begin{equation}\label{abcd1}
a(x)+b(y)=c(x+y)+d\left(\frac{x}{y}\right)\;,\;\;\;(x,y)\in(0,\infty)^2\setminus
M\;.
\end{equation}

Then there exist real constants $\lambda$, $\varkappa_1$, $\varkappa_2$,
$\alpha$, $\beta$, $\gamma$ and $\delta$  satisfying
$\alpha+\beta=\gamma+\delta$ such that for $l_1$-almost all
$x\in(0,\infty)$
$$
a(x)=\lambda
x+\varkappa_1\log(x)+\alpha\;,\;\;\;b(x)=\lambda\:x+\varkappa_2\log(x)+\beta,
$$
$$
c(x)=\lambda\:x+(\varkappa_1+\varkappa_2)\log(x)+\gamma\;,\;\;\;d(x)=\varkappa_1\log\left(\tfrac{x}{x+1}\right)-\varkappa_2\log(1+x)+\delta.
$$
\end{proposition}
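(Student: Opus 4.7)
My plan is to exploit a scaling substitution that eliminates the ratio $x/y$ and reduces \eqref{abcd1} to a Pexider-Cauchy equation parametrized by a dilation $s$. Writing the equation at $(sx,sy)$ and subtracting the original equation (both being valid off an $l_3$-null set, since $(s,x,y)\mapsto(s,sx,sy)$ is a smooth diffeomorphism of $(0,\infty)^3$ that preserves null sets), I obtain for $l_3$-a.e.\ $(s,x,y)$
$$[a(sx)-a(x)]+[b(sy)-b(y)]=c(s(x+y))-c(x+y).$$
By Fubini, for $l_1$-a.e.\ $s$ this identity holds for $l_2$-a.e.\ $(x,y)$. Setting $\phi_s(x)=a(sx)-a(x)$, $\chi_s(y)=b(sy)-b(y)$, $\psi_s(u)=c(su)-c(u)$, this is precisely the a.e.\ Pexider equation $\phi_s(x)+\chi_s(y)=\psi_s(x+y)$ for measurable functions, whose standard measurable a.e.\ solution yields constants $A(s),B(s),C(s)$ with
$$\phi_s(x)=A(s)x+B(s),\quad \chi_s(y)=A(s)y+C(s),\quad \psi_s(u)=A(s)u+B(s)+C(s)$$
a.e.\ in $x$, $y$, $u$ respectively.

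Next I would derive and solve functional equations for $A,B,C$. For $l_2$-a.e.\ pair $(x_1,x_2)$ with $x_1\ne x_2$ one has
$$A(s)=\frac{[a(sx_1)-a(sx_2)]-[a(x_1)-a(x_2)]}{x_1-x_2}\quad\mbox{for $l_1$-a.e. } s,$$
and the right-hand side is Borel measurable in $s$ (analogously for $B,C$). Writing $a(s_1s_2x)-a(x)$ by telescoping through $s_2x$ and reading off the coefficient of $x$ and the constant term of the resulting affine identity, I get
$$A(s_1s_2)=s_2A(s_1)+A(s_2)\quad\mbox{and}\quad B(s_1s_2)=B(s_1)+B(s_2)\quad\mbox{a.e.}$$
Exchanging $s_1$ and $s_2$ in the first relation forces $(s_1-1)A(s_2)=(s_2-1)A(s_1)$ a.e., whence $A(s)=\lambda(s-1)$ a.e.\ for some real constant $\lambda$. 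The measurable a.e.\ multiplicative Cauchy equations for $B$ and $C$ give $B(s)=\varkappa_1\log s$ and $C(s)=\varkappa_2\log s$ a.e.

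Returning to $\phi_s$ with these expressions, $a(sx)-a(x)=\lambda(s-1)x+\varkappa_1\log s$ a.e.\ in $(s,x)$. Fixing a generic $x_0$ in the full-measure set and substituting $t=sx_0$ gives $a(t)=\lambda t+\varkappa_1\log t+\alpha$ a.e., with $\alpha=a(x_0)-\lambda x_0-\varkappa_1\log x_0$; the same argument yields $b(t)=\lambda t+\varkappa_2\log t+\beta$ and $c(t)=\lambda t+(\varkappa_1+\varkappa_2)\log t+\gamma$ a.e. Plugging these back into \eqref{abcd1} and setting $t=x/y$ (with $(x,y)$ ranging over a full-measure set) forces
$$d(t)=\varkappa_1\log\!\left(\tfrac{t}{t+1}\right)-\varkappa_2\log(t+1)+\delta\quad\mbox{a.e.},$$
with $\delta=\alpha+\beta-\gamma$, which automatically yields the stated identity $\alpha+\beta=\gamma+\delta$.

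The main obstacle is the careful bookkeeping of exceptional null sets through the scaling substitution and the successive Fubini steps, so that the Pexider-Cauchy theory can be invoked in its measurable a.e.\ (rather than pointwise) form; the functional-equation ingredients themselves (a.e.\ Pexider, measurable a.e.\ Cauchy and multiplicative Cauchy equations) are standard, but they must be applied with the correct a.e.\ quantifier and with the Borel measurability of the auxiliary coefficient functions $A,B,C$ established independently via the explicit formula above.
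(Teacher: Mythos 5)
Your proposal is correct, and its core is the same as the paper's own ``new method'': dilate the equation \eqref{abcd1} by $s$ and subtract, which kills $d$ and leaves an almost-everywhere Pexider equation for the differences $a(sx)-a(x)$, $b(sy)-b(y)$, $c(su)-c(u)$; then telescoping/cocycle identities give $A(s_1s_2)=s_2A(s_1)+A(s_2)$ and additivity of $B$, whence $A(s)=\lambda(s-1)$ and $B(s)=\varkappa_1\log s$ --- exactly the paper's equations for $\Lambda$ and $\alpha(\cdot)$. Where you genuinely diverge is the endgame. The paper, holding the identity $a(xz)-a(z)=\lambda(x-1)z+\varkappa\log x$ for \emph{every} $x$ and $l_1$-a.e.\ $z$, introduces $h(x)=a(x)-\lambda x-\varkappa\log x$, observes that $h$ is ``semi-constant'' ($h(xz)=h(z)$), and invokes Lemma \ref{gxy=gx}, proved by a characteristic-function argument ($\int_0^1 e^{itG(xy)}dy$), to conclude $h$ is a.e.\ constant. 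You instead apply Fubini to the jointly measurable full-measure set where $a(sx)-a(x)=\lambda(s-1)x+\varkappa_1\log s$, fix a generic base point $x_0$, and substitute $t=sx_0$ to read off $a(t)=\lambda t+\varkappa_1\log t+\alpha$ directly; this bypasses the semi-constant lemma entirely and is a genuinely more elementary finish for the measurable case. The trade-offs: your route keeps only a.e.-in-$s$ quantifiers (your $l_3$/Fubini setup), so you must call on a.e.\ versions of the additive and multiplicative Cauchy equations for the measurable coefficients $A,B,C$ (de Bruijn-type results, which are in the paper's toolkit anyway), and the asserted Borel measurability of $A,B,C$ via difference quotients needs a small non-circular formulation (compare quotients at two pairs of base points, which is a manifestly Borel condition, before identifying the common value with $A(s)$); the paper, by contrast, exploits that $\frac1r M$ is null for \emph{each fixed} $r$ to get its Cauchy equation for $\alpha(\cdot)$ everywhere, and its semi-constant detour is not wasted effort: that notion is precisely what survives, as a genuine obstruction, in the non-measurable case of Theorem \ref{non}, which your Fubini shortcut cannot reach.
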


\begin{proof}
For any $r>0$ from (\ref{abcd1}) we get
\begin{equation}\label{cpalr}
a(rx)+b(ry)=c(r(x+y))+d\left(\frac{x}{y}\right)\;,\;\;\;(x,y)\in(0,\infty)^2\setminus\frac{1}{r}M.
\end{equation}
Subtracting now (\ref{abcd1}) from (\ref{cpalr}) for any $r>0$ we
arrive at
\begin{equation}\label{abc}
a_r(x)+b_r(y)=c_r(x+y)\;,\;\;\;(x,y)\in(0,\infty)^2\setminus
\left(M\cup\frac{1}{r}M\right)\;,
\end{equation}
where $a_r$, $b_r$ and $c_r$ are defined by
$a_r(x)=a(rx)-a(x)$, $b_r(x)=b(rx)-b(x)$ and $c_r(x)=c(rx)-c(x)$, $x\in(0,\infty)$, respectively.

Due to measurability of $a$, $b$ and $c$ it follows from \eqref{abc} that for any
$r\in(0,\infty)$ there exist $\Lambda(r)$, $\alpha(r)$
and $\beta(r)$ such that for $l_1$-almost all $x\in(0,\infty)$
$$
a_r(x)=\Lambda(r)\:x+\alpha(r)\;,\;\;\;b_r(x)=\Lambda(r)\:x+\beta(r)\;,\;\;\;c_r(x)=\Lambda(r)\:x+\alpha(r)+\beta(r)
$$
(for more details see Theorem 6 in Section 5 below).

First, consider the functions $a_r$ for any $r>0$.

By the definition of $a_r$ and the above observation it follows
that for any $(x,y)\in(0,\infty)^2$ there exists a measurable set
$E_{xy}\subset(0,\infty)$ such that $l_1(E_{xy})=0$ and
$$
a_{xy}(z)=a(xyz)-a(z)=\Lambda(xy)\:z+\alpha(xy)\;,\;\;\;\forall\:z\in(0,\infty)\setminus
E_{xy}.
$$
That is the above identity holds on the set
$$
U_1=\{(x,y,z):\:(x,y)\in (0,\infty)^2,\:z\in(0,\infty)\setminus
E_{xy}\}.$$

Similarly
$$
a_y(xz)=a(xyz)-a(xz)=\Lambda(y)\:xz+\alpha(y)
$$
holds on
$U_2=\{(x,y,z):\:(x,y)\in(0,\infty)^2,\:z\in(0,\infty)\setminus\frac{1}{x}E_y\}
$, where for any $y>0$ the set $E_y\subset(0,\infty)$ is such that
$l_1(E_y)=0$.

Also
\begin{equation}\label{ax}
a_x(z)=a(xz)-a(z)=\Lambda(x)\:z+\alpha(x)
\end{equation}
holds on $U_3=\{(x,y,z):\:(x,y)\in(0,\infty)^2,\:z\in
(0,\infty)\setminus E_x\}$.

Taking into account the last three identities, since
$a_{xy}(z)=a_y(xz)+a_x(z)$, we arrive at
$$
\Lambda(xy)\:z+\alpha(xy)=\Lambda(y)\:xz+\alpha(y)+\Lambda(x)\:z+\alpha(x)
$$
on $V_1=U_1\cap U_2\cap
U_3=\{(x,y,z):\:(x,y)\in(0,\infty)^2,\:z\in (0,\infty)\setminus
(E_{xy}\cup\frac{1}{x}E_y\cup E_x)\}$.

Interchanging the roles of $x$ and $y$ in the above reasoning we arrive
at
$$
\Lambda(xy)\:z+\alpha(xy)=\Lambda(x)\:yz+\alpha(x)+\Lambda(y)\:z+\alpha(y)
$$
on $V_2=\{(x,y,z):\:(x,y)\in(0,\infty)^2,\:z\in
(0,\infty)\setminus (E_{xy}\cup\frac{1}{y}E_x\cup E_y)\}$.

Finally we conclude that
\begin{equation}\label{je}
\Lambda(xy)\:z+\alpha(xy)=\Lambda(y)\:xz+\alpha(y)+\Lambda(x)\:z+\alpha(x)=\Lambda(x)\:yz+\alpha(y)+\Lambda(y)\:z+\alpha(x)
\end{equation}
on $$V=V_1\cap V_2=\{(x,y,z):\:(x,y)\in(0,\infty)^2,\:z\in
(0,\infty)\setminus E_{x,y}\},
$$
where $E_{x,y}=E_{xy}\cup\frac{1}{x}E_y\cup E_x\cup
E_{xy}\cup\frac{1}{y}E_x\cup E_y$ and thus $l_1(E_{x,y})=0$.

Consequently,
$\Lambda(y)\:xz+\Lambda(x)\:z=\Lambda(x)\:yz+\Lambda(y)\:z$ for
any $x,y>0$ and any $z\in(0,\infty)\setminus E_{x,y}$. Thus taking
$x=2$ and denoting $\Lambda(2)=\lambda$ we obtain
\begin{equation}\label{ayz}
\Lambda(y)=\lambda\:(y-1)\qquad \forall\:y\in(0,\,\infty).
\end{equation}

Thus $\Lambda(xy)\:z=\Lambda(y)\:xz+\Lambda(x)\:z$ and returning to the first equation of \eqref{je} we arrive at
$\alpha(xy)=\alpha(x)+\alpha(y)$ on $(0,\infty)^2$. Note that due to \eqref{ax} and \eqref{ayz} it follows that $\alpha$ is a measurable function. Consequently,
$\alpha(x)=\varkappa\log(x)$ for $x\in(0,\infty)$, where $\varkappa=\varkappa_a$ is
a real constant.

Now, we plug \eqref{ayz} with $y$ replaced by $x$ into \eqref{ax}
getting
\begin{equation}\label{const}
a(xz)-a(z)=\lambda\:(x-1)z+\varkappa \log(x)
\end{equation}
for any $x\in(0,\infty)$ and for any $z\in(0,\infty)\setminus
(E_x\cap E_{2,x})$. Define now a new function $h:(0,\infty)\to\R$ by $h(x)=a(x)-\lambda\:x-\varkappa\log(x)$. Then (\ref{const})
has the form
$$
h(xz)=h(z)\quad\quad
\forall\:x>0\quad\mbox{and}\quad\forall\:z\in(0,\infty)\setminus
(E_x\cap E_{2,x}).
$$
By Lemma \ref{gxy=gx} below it follows that $h$ is constant, say equal
to $\alpha$, outside of a set of $l_1$ measure zero. Thus the
final formula for $a$ is proved.

The formulas for $b$ and $c$
$$
b(x)=\lambda x+\varkappa_b\log(x)+\beta\qquad\mbox{and}\qquad c(x)=\lambda x+\varkappa_c\log(x)+\gamma,
$$
which hold $l_1$-almost everywhere, follow in much the same way. Note that \eqref{abc} yields $\varkappa_a+\varkappa_b=\varkappa_c$.

To retrieve $d$ from \eqref{abcd} it suffices to change the
variables as follows $(x,y)\to(x/y,y)=(z,y)$ that is to transform
the set $(0,\infty)^2\setminus M$ by this mapping. Then by the
Fubini theorem again we conclude that there exists a set
$Z\subset(0,\infty)$ with $l_1(Z)=0$ such that for any $z\in
(0,\infty)\setminus Z$ there exists a set $E_z$ with $l_1(E_z)=0$
such that for any $z\in (0,\infty)\setminus Z$ and for any
$y\in(0,\infty)\setminus E_z$ we have
$$
d(z)=\lambda\:zy+\varkappa_a\log(zy)+\alpha+\lambda\:y+\varkappa_b\log(y)+\beta-\lambda\:(zy+y)-(\varkappa_a+\varkappa_b)\log(zy+y)-\gamma
$$$$=\varkappa_a\log(z/(z+1))+\varkappa_b\log(1/(z+1))+\alpha+\beta-\gamma.
$$
\end{proof}

\begin{lemma}\label{gxy=gx}
Let $G:(0,\infty)\to\R$ be a Borel measurable function such
that
\begin{equation}\label{cpl}
G(xy)=G(y)\;,\;\;\;\forall\:x>0\;\mbox{ and }\;\forall
y\in(0,\infty)\setminus E_x,\;\mbox{ where }\:l_1(E_x)=0.
\end{equation}

Then $G(x)$ is constant for $l_1$-almost all $x$'s.
\end{lemma}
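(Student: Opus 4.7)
The plan is to transfer the problem to the additive group $\R$ via the logarithm and then exploit a two-step Fubini argument: the hypothesis gives null slices in one direction for the ``bad'' set, and Fubini then yields null slices in the other direction, from which constancy follows at once.

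First I would set $H(t):=G(e^t)$ for $t\in\R$. Borel measurability of $G$ transfers to $H$. Since $\log:(0,\infty)\to\R$ is a diffeomorphism, in particular locally Lipschitz, it sends $l_1$-null sets to $l_1$-null sets. Writing $x=e^s$, $y=e^t$, the assumption \eqref{cpl} reads
\[
\forall\,s\in\R,\qquad H(s+t)=H(t)\ \text{ for every }\ t\in\R\setminus F_s,\quad l_1(F_s)=0,
\]
with $F_s=\log(E_{e^s})$. Thus the problem reduces to showing that a Borel measurable $H:\R\to\R$ which is ``translation-invariant modulo nullsets for every shift'' is constant almost everywhere.

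Consider
\[
A:=\{(s,t)\in\R^2:H(s+t)\neq H(t)\}.
\]
Because $H$ is Borel measurable and $(s,t)\mapsto s+t$ is continuous, the function $(s,t)\mapsto H(s+t)-H(t)$ is Borel measurable on $\R^2$, so $A$ is Borel (hence Lebesgue) measurable. For each fixed $s$ the vertical slice satisfies $A_s\subset F_s$, so $l_1(A_s)=0$. By Tonelli's theorem,
\[
l_2(A)=\int_{\R}l_1(A_s)\,ds=0.
\]
Applying Tonelli in the other order, the horizontal slice $A^t=\{s:(s,t)\in A\}$ has $l_1(A^t)=0$ for $l_1$-almost every $t$. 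Fix any such $t_0$. Then $H(s+t_0)=H(t_0)$ for $l_1$-almost every $s\in\R$, and the substitution $u=s+t_0$ shows that $H$ equals the constant $H(t_0)$ outside an $l_1$-null set. Translating back via $G(y)=H(\log y)$ and using that the preimage under $\log$ of an $l_1$-null set in $\R$ is $l_1$-null in $(0,\infty)$ (again by the Lipschitz property of the exponential on each compact subinterval) yields the claim.

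There is no genuine obstacle in the argument; the only delicate point worth singling out is the joint measurability of $(s,t)\mapsto H(s+t)-H(t)$, which is what legitimates the use of Tonelli on the set $A$. Everything else is a clean change of variables plus a symmetric application of Fubini/Tonelli to convert ``for every $s$, a.e.\ $t$'' into ``for a.e.\ $t$, a.e.\ $s$''.
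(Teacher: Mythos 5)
Your proof is correct, but it takes a genuinely different route from the paper's. You transfer the problem to the additive line via the logarithm (legitimate, since $\log$ and $\exp$ are locally Lipschitz, hence map null sets to null sets after a countable exhaustion by compacta) and then run a symmetric Fubini/Tonelli argument on the Borel set $A=\{(s,t):H(s+t)\neq H(t)\}$: the hypothesis makes every vertical slice of $A$ null, Tonelli makes $A$ itself $l_2$-null, and Tonelli in the other order yields a single $t_0$ whose horizontal slice is null, whence $H\equiv H(t_0)$ almost everywhere. The paper instead stays in the multiplicative setting and uses a characteristic-function (integral-averaging) device: setting $w(t)=\int_0^1 e^{itG(y)}\,dy$, the hypothesis together with the change of variables $u=xy$ gives $\int_0^x\bigl(e^{itG(u)}-w(t)\bigr)\,du=0$ for all $x>0$, hence $e^{itG(u)}=w(t)$ for almost every $u$ for each fixed $t$, and comparing two values $t_1\neq t_2$ then pins $G$ down as almost everywhere constant. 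Your argument is the more elementary of the two: its only delicate ingredient is the joint Borel measurability of $(s,t)\mapsto H(s+t)-H(t)$ (which you correctly single out, and which is exactly where Borel rather than mere Lebesgue measurability is convenient), and it sidesteps the paper's somewhat slippery step $w(t_1)=(w(t_2))^{t_1/t_2}$, in which complex powers are multivalued; made rigorous, that step amounts to observing that if $e^{it_jG(u)}$ is a.e.\ constant for two values $t_1,t_2$ with $t_1/t_2$ irrational, then $G(u)-G(u')\in \frac{2\pi}{t_1}\mathbb{Z}\cap\frac{2\pi}{t_2}\mathbb{Z}=\{0\}$ for $u,u'$ in a common full-measure set. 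What the paper's approach buys in exchange is that it never leaves one-dimensional integrals, so no product-measurability or Fubini considerations arise at all.
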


\begin{proof}
By \eqref{cpl} we get for any $t\in\R$ and for any $x>0$ that
$$
\int_0^1\:e^{itG(xy)}\:dy=\int_0^1\:e^{itG(y)}\:dy=:w(t).
$$

Changing the variable $u=xy$ in the first integral we obtain
$$
xw(t)=\int_0^x\:e^{itG(u)}\:du.
$$
That is $\int_0^x\:\left(e^{itG(u)}-w(t)\right)\:du=0$ for any
$x>0$. Hence for any Borel set $B\subset(0,\infty)$ we have
$$
\int_B\:\left(e^{itG(u)}-w(t)\right)\:du=0.
$$
By the basic property of the Lebesgue integral we conclude that
$e^{itG(u)}=w(t)$ for every $t\in\R$ and any $u\in(0,\infty)\setminus E_t$, with $l_1(E_t)=0$.
Take arbitrary $t_1,t_2\in\R$, $t_1\ne t_2$. Then for $u\in(0,\infty)\setminus (E_{t_1}\cup E_{t_2})$ we have
$$
w(t_1)=e^{it_1G(u)}=\left(e^{it_2G(u)}\right)^{t_1/t_2}=(w(t_2))^{t_1/t_2}.
$$
Consequently, there exist constant $\varkappa\in\C$ such that $w(t)=e^{i\varkappa t}$, $t\in\R$. Finally, we conclude that $G(u)=\varkappa\in\R$ outside a set of the Lebesgue measure zero.
\end{proof}

\section{Almost everywhere Olkin-Baker functional equation without measurabi\-li\-ty}

Recently, Kominek (2011) proved that there exist solutions of \eqref{cpl} which are not constant $l_1$-almost everywhere. Actually, in that paper an additive  version of \eqref{cpl} of the form
$$
H(x+y)=H(y)\qquad x\in\R\setminus X,\;\;l_1(X)=0,\;\;y\in\R\setminus E_x,\;l_1(E_x)=0,
$$
was considered. In view of Lemma \ref{gxy=gx} these solutions are not Borel measurable.
Any function $G$ satisfying \eqref{cpl} will be called {\em  semi-constant} function.

Recall, that a function $A:(0,\infty)\to\R$ is called {\em additive} whenever it satisfies the Cauchy equation, that is $A(x+y)=A(x)+A(y)$, $x,y>0$. Similarly, a function $L:(0,\infty)\to\R$ is termed {\em a logarithmic type} function provided that $L(xy)=L(x)+L(y)$, $x,y>0$.
Semi-constant, additive and  logarithmic type functions will play the crucial role in our approach to the Olkin-Baker equation, when no regularity conditions are imposed on the unknown functions.

\begin{theorem}\label{non}
Let $a$, $b$, $c$ and $d$ be real functions on
$(0,\infty)$. Assume that there exists a measurable set
$M\subset(0,\infty)^2$ such that $l_2(M)=0$ and
\begin{equation}\label{cpl1}
a(x)+b(y)=c(x+y)+d\left(\frac{x}{y}\right)\;,\;\;\;(x,y)\in(0,\infty)^2\setminus
M.
\end{equation}
Then there exist: an additive  function
$A: (0, \infty) \longrightarrow \R$, logarithmic type functions $L_a,  L_b : (0, \infty) \longrightarrow \R$  and real constants $\alpha, \beta, \gamma$ such that
$$a(x) = A(x) + L_a(x) + \alpha, \quad b(x) = A(x) + L_b(x) + \beta,  \quad c(x) = A(x) + L_a(x) + L_b(x) + \gamma $$
and
$$ d(x) = L_a\left(\frac{x}{x+1}\right) - L_b(x+1) + \alpha + \beta - \gamma$$
for $l_1$-almost all $x\in(0,\infty)$.
\end{theorem}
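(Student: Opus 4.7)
The plan is to mimic the proof of Proposition \ref{eq1} but replace each use of measurability by its non-measurable counterpart: affine solutions of Pexider's equation become additive-plus-constant, multiplicative Cauchy relations produce logarithmic type functions rather than $\varkappa\log$, and Lemma \ref{gxy=gx} is not available (in view of Kominek's counterexample). As before, I would dilate \eqref{cpl1} by $r$ and subtract to obtain the a.e.\ Pexider equation
$$
a_r(x)+b_r(y)=c_r(x+y),\qquad (x,y)\in(0,\infty)^2\setminus(M\cup r^{-1}M),
$$
where $a_r,b_r,c_r$ are defined as in the proof of Proposition \ref{eq1}. The non-measurable version of the a.e.\ Pexider theorem then provides, for each $r>0$, an additive function $A_r:(0,\infty)\to\R$ and real constants $\alpha(r),\beta(r)$ with
$$
a_r(z)=A_r(z)+\alpha(r),\quad b_r(z)=A_r(z)+\beta(r),\quad c_r(z)=A_r(z)+\alpha(r)+\beta(r)
$$
holding $l_1$-almost everywhere.

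The pivotal step is to pin down how $A_r$ depends on $r$. From the cocycle identity $a_{xy}(z)=a_x(z)+a_y(xz)$ (valid for a.e.\ $z$) and its $x\leftrightarrow y$ partner I would extract
$$
A_x(yz)-A_x(z)=A_y(xz)-A_y(z)\qquad\mbox{for a.e. }z,
$$
and then upgrade this to every $z>0$: both sides are additive in $z$, so their difference is an additive function vanishing on a set of positive Lebesgue measure, hence measurable, hence linear, hence identically zero. Setting $y=2$ and using $A_x(2z)=2A_x(z)$ gives the crucial formula
$$
A_x(z)=A_2(xz)-A_2(z),\qquad x,z>0.
$$
Comparing the two representations of $a_{xy}$ then forces $\alpha(xy)=\alpha(x)+\alpha(y)$, and similarly $\beta(xy)=\beta(x)+\beta(y)$, so both $\alpha$ and $\beta$ are logarithmic type functions.

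Writing $A:=A_2$ and $\psi:=a-A$, the identity $a_r=A_r+\alpha(r)$ becomes $\psi(rz)-\psi(z)=\alpha(r)$ for every $r>0$ and a.e.\ $z$. Fubini then supplies a $z_0>0$ for which this holds for a.e.\ $r$; the substitution $s=rz_0$ together with $\alpha(s/z_0)=\alpha(s)-\alpha(z_0)$ (logarithmicity) yields $a(s)=A(s)+L_a(s)+\alpha_0$ a.e., where $L_a:=\alpha$ and $\alpha_0\in\R$. The same template applied to $b_r$ and $c_r$ produces $b=A+L_b+\beta_0$ and $c=A+L_a+L_b+\gamma_0$ a.e.\ with the \emph{same} additive function $A$, the common $A$ being forced by the Pexider structure. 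Substituting these formulas back into \eqref{cpl1} and setting $z=x/y$, additivity of $A$ cancels the $A$-terms while logarithmicity simplifies $L_a(zy)-L_a(zy+y)=L_a(z/(z+1))$ and $L_b(y)-L_b(zy+y)=-L_b(z+1)$, delivering the stated formula for $d$.

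The main obstacle is the rigidity argument in the middle paragraph — passing from the a.e.\ relation between $A_x$, $A_y$ and $A_{xy}$ to an everywhere relation — which rests on the fact that additive functions agreeing on a set of positive Lebesgue measure must be measurable, hence linear, hence forced to coincide. Once this step is secured, the rest is careful Fubini bookkeeping combined with manipulations of logarithmic type functions, and no appeal to an analogue of Lemma \ref{gxy=gx} is needed.
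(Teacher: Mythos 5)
Your first half is sound and essentially parallels the paper: dilation and subtraction give the a.e.\ Pexider equation, Theorem \ref{ideal} yields $a_r=A_r+\alpha(r)$, $b_r=A_r+\beta(r)$, $c_r=A_r+\alpha(r)+\beta(r)$, and your cocycle-plus-rigidity argument correctly gives $A_r(z)=A(rz)-A(z)$ with $A:=A_2$ and the logarithmicity of $\alpha$ and $\beta$ (your justification ``vanishes on a set of positive measure, hence measurable, hence linear'' is informal, but the underlying fact is standard: by Steinhaus, an additive function vanishing on a set of positive measure vanishes on an interval around $0$, hence everywhere). The fatal step is the Fubini argument. At that point your information is: $\psi(rz)-\psi(z)=\alpha(r)$ for \emph{every} $r>0$ and $l_1$-a.e.\ $z$, with the exceptional set depending on $r$. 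Setting $h:=\psi-\alpha$ and using logarithmicity of $\alpha$, this is \emph{exactly} the statement that $h$ is semi-constant in the sense of equation \eqref{cpl}. Hence your Fubini step, if valid, would prove that every semi-constant function is a.e.\ constant --- which is false: Kominek's construction, cited at the start of Section 4, produces semi-constant functions that are not a.e.\ constant (this is precisely why Lemma \ref{gxy=gx} has no non-measurable analogue, and your closing remark that no such analogue is needed is the tell: you have smuggled it in). The mechanism of the failure is that the bad set $N=\{(r,z):\psi(rz)-\psi(z)\neq\alpha(r)\}$ is built from non-measurable functions; each section $\{z:(r,z)\in N\}$ is null, but $N$ need not be $l_2$-measurable, and for non-measurable planar sets one cannot pass from ``all sections in one direction are null'' to ``some section in the other direction is null'' (under CH, Sierpi\'nski-type sets have all vertical sections countable and all horizontal sections co-null). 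Indeed, if $h$ is a Kominek function, no $z_0$ of the kind you want exists, since its existence would force $h$ to be a.e.\ constant.

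The repair --- and this is what the paper does --- is to stop at $a=A+L_a+h_a$, $b=A+L_b+h_b$, $c=A+L_a+L_b+h_c$ with $h_a,h_b,h_c$ semi-constant (which is where your argument legitimately arrives), and then use the original equation \eqref{cpl1} a \emph{second} time, jointly on the three residuals, rather than trying to normalize each one separately. Substituting these forms into \eqref{cpl1} and putting $z=x/y$, one gets first that $h_a+h_b-h_c$ equals a constant $\delta$ a.e.\ and that $d(z)=L_a\left(\tfrac{z}{z+1}\right)-L_b(z+1)+\delta$ a.e.\ (here the Fubini argument \emph{is} legitimate, because the exceptional sets involved are Lebesgue null, hence measurable by completeness); and second that $h_a(x)+h_b(y)=h_c(x+y)+\delta$ for $l_2$-a.e.\ $(x,y)$. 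Applying Theorem \ref{ideal} to this Pexider equation gives $h_a=\tilde{A}+\alpha$, $h_b=\tilde{A}+\beta$, $h_c=\tilde{A}+\alpha+\beta-\delta$ a.e.\ with $\tilde{A}$ additive, and now semi-constancy of $h_a$ (namely $h_a(2y)=h_a(y)$ a.e.) forces $\tilde{A}(2y)=\tilde{A}(y)$, i.e.\ $\tilde{A}=0$ a.e.\ on $(0,\infty)$ and hence everywhere by Lemma 5 of Section 5. It is only this interplay of the three residuals through the equation itself that excludes the Kominek pathology; the single-function relation your Fubini step relies on cannot do it.
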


\begin{proof}
Repeating the first part of the argument used in the proof of Proposition \ref{eq1} we arrive at the following representations of functions $a$, $b$ and $c$ which are valid $l_1$-almost everywhere on $(0,\infty)$
$$
a(x)=A(x)+L_a(x)+h_a(x),\quad b(x)=A(x)+L_b(x)+h_b(x),$$
$$
 c(x)=A(x)+L_a(x)+L_b(x)+h_c(x),
$$
where $h_a$, $h_b$ and $h_c$ are semi-constant functions.

Plugging these forms of unknown functions back to the original equation \eqref{cpl1}, and denoting $z=x/y$ similarly as in the previous proof we get
$$
d(z)=L_a(z/(z+1))+L_b(1/(z+1)+h_a(zy)+h_b(y)-h_c(zy+y).
$$
holding for $l_1$-almost all $z\in(0,\infty)$ and for any $y\in(0,\infty)\setminus E_z$, where $l_1(E_z)=0$. Note that, by the definition of semi-constant functions, possibly extending $E_z$ to another set $\tilde{E_z}$ but still with $l_1(\tilde{E}_z)=0$, we have
\bel{hahb}
h_a(y)+h_b(y)-h_c(y)=d(z)-L_a(z/(z+1))-L_b(1/(z+1))
\ee
which holds for $l_1$-almost all $z\in(0,\infty)$ and any $y\in (0,\infty)\setminus \tilde{E}_z$. Fix $z$ in \eqref{hahb}. Then we conclude from \eqref{hahb} that $h_a+h_b-h_c$ is $l_1$-almost everywhere constant, say $\delta$. Consequently $d(z)=L_a(z/(z+1))-L_b(1/(z+1))+\delta$.

Now, \eqref{cpl1} yields the Pexider equation
$$
h_a(x)+h_b(y)=h_c(x+y)+\delta
$$
for $l_2$-almost all $(x,y)\in(0,\infty)^2$. Consequently, by means of Theorem \ref{ideal} (see Section 5\, below), there exist an additive mapping $\tilde{A}: \R \longrightarrow \R$, real constants $\alpha, \beta$  and a set $E$ of measure zero such that
$$h_c(x) =\tilde{A}(x) + \alpha + \beta - \delta, \quad h_a(x) = \tilde{A}(x) + \alpha \quad {\rm and} \quad h_b(x) = \tilde{A}(x) + \beta $$
for all $x \in (0, \infty) \setminus E$. Since $h_a$ is semi-constant we derive the existence of a set $E_2$ of measure zero such that
$$
h_a(2y) = h_a(y) \quad {\rm for \,\, all} \quad y \in (0, \infty) \setminus E_2.
$$
Obviously $\tilde{E}:= E_2 \cup E \cup \frac{1}{2} E$ is of measure zero and, for every $y \in (0, \infty) \setminus \tilde{E}$, one has
$$   2\tilde{A}(y) + \alpha = \tilde{A}(2y) + \alpha =  h_a(2y) = h_a(y) = \tilde{A}(y) + \alpha \,.$$
This implies that\, $\tilde{A}(y) = 0$ \,for\, $l_1$-almost all positive $y$'s (actually, on account of Lemma 5, see Section 5\, below, we see that $\tilde{A}$ vanishes everywhere on $\R$). Thus
$$h_c(x) = \alpha + \beta - \delta =: \gamma, \quad h_a(x) =  \alpha \quad {\rm and} \quad h_b(x) =  \beta $$
for\, $l_1$-almost all $x \in (0, \infty)$ and the proof has been completed.

\end{proof}

\newpage

\section{ More abstract setting }

\vspace{1cm}

The nullsets in the preceding section may naturally be replaced by an abstract notion of
 ``negligible'' sets, i.e. members of a {\it proper
linearly invariant ideal} (briefly: p.l.i. ideal) in $\R$ defined as follows.

A nonempty family $\id\subset 2^{\R} \setminus \{\R\}$ is termed to be a
 p.l.i. ideal (resp. p.l.i. $\sigma-$ideal) provided that it is closed under finite (resp. countable) set theoretical unions, i.e.
$$ A, B \in \id \Longrightarrow A \cup B \in \id \qquad  ({\rm resp.}\,\, A_n \in \id,\, n\in \N \Longrightarrow \bigcup _{n \in \N}A_n \in \id )\,,$$
 hereditary with respect to descending inclusions, i.e.
$$ A \in \id,\ ,B \subset A \Longrightarrow B \in \id\,,$$
 and such that jointly with a given set it contains its image under any affine transformation of the real line onto itself, i.e
$$ A \in \id,\, \alpha \in \R \setminus \{0\}, \beta \in \R \Longrightarrow \alpha A + \beta \in \id\,.$$

Clearly the family of all nullsets (sets of Lebesgue measure zero) in $\R$ forms a p.l.i. $\sigma$-ideal. However there are numerous other p.l.i. ideals; let us mention
only a few of them:
\begin{itemize}
\item the family of all first category (in the sense of Baire) subsets of $\R$;
\item the family of all bounded subsets of $\R$;
\item the family of all sets of finite outer Lebesgue measure in $\R$;
\item the family of all countable subsets of $\R$;
\item given a nonempty family  ${\cal R} \subset 2^{\R} $  such that no finite union of sets of the form
$\alpha U + \beta, \,\, \alpha, \beta \in \R, \, \alpha \not = 0,\,\, U \in {\cal R},\, $ \, coincides with $\R$ the collection of all subsets of finite unions of affine images
of sets from ${\cal R}$ forms a  p.l.i. ideal (generated by ${\cal R}$).
\end{itemize}

\bigskip

\begin{remark}
 Each member of a p.l.i. $\sigma$-ideal forms a boundary set.
\end{remark}
\begin{proof}
\par If we had an interval $(\alpha, \beta)$ in a p.l.i. $\sigma$-ideal $\id$ in $\R$  then the union of  intervals  $k(-\varepsilon, \varepsilon)$ with $\varepsilon :=
\frac{1}{2}(\beta - \alpha)$, over all positive integers $k$ would coincide with the whole of $\R$, contradicting the properness of $\id$.
\end{proof}

\bigskip

We say that a property ${\cal P}(x)$ holds for
$\id$-almost all $x \in \R$
iff ${\cal P}(x)$ is valid for all $x \in \R \setminus U$ provided that $U\in\id$.
\par
For a subset $M\subset \R^2$ and $x\in \R$ we define a {\it section}
$$
M[x]:=\{y\in \R:\ (x,y)\in M\}.
$$

\par
Motivated by Fubini's Theorem we define the family
$$
\Omega(\id):=\{M\subset \R^2:\ \ M[x]\in\id\,\, \mbox{\rm for $\id$-almost all}\,\,
x \in \R \}
$$
and refer the reader to
the paper of  Ger (1975) or to the monograph of  Kuczma (2009) [Ch. XVII, \S 5]) for further details.

\medskip

In the sequel we will need the following

\begin{lemma} Given a p.l.i $\sigma$-ideal $\id$ in $\R$ and a positive number $c$, assume that an additive function $A: \R \longrightarrow \R$ enjoys the property that $A\vert_{(c, \infty) \setminus E} = 0$ for some $E \in \id.$ Then $A$ vanishes on $\R.$
\end{lemma}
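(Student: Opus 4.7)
The plan is to fix an arbitrary $x \in \R$ and to represent $A(x)$ as $A(y+x) - A(y)$ for a well-chosen $y$ at which both $A(y)$ and $A(y+x)$ are known to vanish. The additivity of $A$ reduces everything to showing that the set of admissible $y$'s is nonempty.

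More precisely, given $x \in \R$, I would look for $y$ satisfying simultaneously:
\begin{itemize}
\item $y > c$ and $y + x > c$, so that both $y$ and $y+x$ lie in the ray $(c, \infty)$;
\item $y \notin E$ and $y + x \notin E$, so that the hypothesis yields $A(y) = 0 = A(y+x)$.
\end{itemize}
The condition $y + x \notin E$ is equivalent to $y \notin E - x$. By linear invariance of $\id$ (with $\alpha = 1$ and $\beta = -x$) we have $E - x \in \id$, and by closure under finite unions $E \cup (E-x) \in \id$. Therefore the ``bad'' set of $y$'s is an element of $\id$.

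The key step is now to observe, via Remark~1 (or rather the argument of its proof), that no member of $\id$ can contain an interval; in particular $E \cup (E-x)$ cannot contain the ray $\bigl(\max(c,\, c-x),\, \infty\bigr)$. Consequently one can pick $y \in \bigl(\max(c,\, c-x),\, \infty\bigr) \setminus \bigl(E \cup (E-x)\bigr)$, and for this $y$ both $y$ and $y+x$ belong to $(c,\infty) \setminus E$. From additivity,
$$ A(x) = A(y + x) - A(y) = 0 - 0 = 0. $$
Since $x \in \R$ was arbitrary, $A \equiv 0$ on $\R$.

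I do not anticipate a real obstacle: everything hinges on the two structural features of a p.l.i. $\sigma$-ideal that have already been invoked, namely linear invariance (giving $E - x \in \id$) and the impossibility of intervals lying in $\id$ (guaranteeing a suitable $y$). The $\sigma$-ideal hypothesis as such is not strictly needed here --- closure under finite unions suffices --- but it costs nothing to work in that generality.
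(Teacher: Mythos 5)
Your proof is correct, and its engine is the same as the paper's: fix $x$, use linear invariance and closure under unions to see that the ``bad'' set lies in $\id$, invoke Remark~1 to find a good point, and let additivity transfer the vanishing of $A$ on the ray to $A(x)=0$. The implementation differs, and yours is leaner: you stay on the positive half-line and write $A(x)=A(y+x)-A(y)$, needing only that $y$ and $y+x$ both avoid $E\cup(E-x)$. The paper instead first symmetrizes ($E=-E$ without loss of generality), works with the two-sided set $P=\left((-\infty,-c)\cup(c,\infty)\right)\setminus E$, uses the oddness of additive functions to get $A\vert_P=0$, and then writes $A(x)=A(p)+A(x-p)$ for a point $p\in P\cap(x+P)$; the nonemptiness of that intersection is proved by the same ``a member of $\id$ cannot have interior points'' argument that you use. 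So your route buys a small economy (no symmetrization, no appeal to oddness), while the paper's buys nothing extra here; both rest on Remark~1 in an identical way.

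One precision about your closing remark. The blanket statement ``no member of $\id$ can contain an interval'' genuinely needs the $\sigma$-ideal property: the p.l.i.\ ideal of all bounded subsets of $\R$ contains $(0,1)$, so it fails for mere finite-union closure. What your proof actually needs is weaker --- that no member contains an open ray --- and that does hold for any p.l.i.\ ideal: if $(a,\infty)\in\id$, then $-(a,\infty)+(2a+1)=(-\infty,a+1)\in\id$ by linear invariance, and $(a,\infty)\cup(-\infty,a+1)=\R$ would lie in $\id$, contradicting properness. So your claim that finite unions suffice is right, but it should be justified by this ray argument rather than by the ``no intervals'' statement; under the lemma's stated $\sigma$-ideal hypothesis, everything you wrote goes through as is.
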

\begin{proof}
\par Without loss of generality we may assume that $E = - E$. Consequently, that symmetry property is shared by the set \,$P:= \left( (- \infty, -c) \cup (c, \infty)\right) \setminus E,$ i.e.
$P = - P.$ Plainly, due to the oddness of $A$ one has \,$A\vert_{P} = 0.$ Fix arbitrarily an $x \in \R.$ We are going to show that
\begin{equation}\label{P}
P \cap (x + P) \not = \emptyset\,.
\end{equation}
Indeed, otherwise we would have
$$ P^{\prime} \cup (x + P^{\prime})  = \R, \quad {\rm where} \quad P^{\prime}: = \R \setminus P\,,$$
whence
$$ [-c, c] \cup E \cup [x-c, x+c] \cup (x+E) = \R\,.$$
In particular, we get \, int\! $(E \cup (E+x))  \not = \emptyset$, \, which contradicts Remark 1 because, obviously, the union $E \cup (E+x)$ forms a member of the $\sigma$-ideal $\id$.
\newline \indent Thus the inequality (13) has been proved. Taking now a point $p$ from the intersection $P \cap (x + P)$ we infer that both $p$ and $x-p$ belong to $P$, whence
$$A(x) = A(p) + A(x-p) = 0\,,$$
which finishes the proof.
\end{proof}

We proceed with proving the following result.

\begin{theorem}\label{ideal}
Given a p.l.i. $\sigma$-ideal  $\id$ in $\R$
assume that functions $f,g,h: (0, \infty) \longrightarrow \R$ satisfy the Pexider functional equation
\begin{equation}\label{Pexider}
 f(x+y) = g(x) + h(y)
\end{equation}
for all pairs $(x,y) \in (0, \infty)^2 \setminus M$\, and some member $M$ of the family $\Omega(\id)$
such that $T(M) \in  \Omega(\id)$ for every unimodular transformation $T$ of the real plane.
Then  there exist exactly one additive function $A: \R \longrightarrow \R$ and real constants $\alpha, \beta$ such that
$$f(x) = A(x) + \alpha + \beta, \quad g(x) = A(x) + \alpha \quad and \quad h(x) = A(x) + \beta $$
for $\id$-almost all $x \in (0, \infty)$.
\end{theorem}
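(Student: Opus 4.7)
The plan is to first reduce the Pexider equation \eqref{Pexider} to a Cauchy-type equation on a translated quarter-plane via the standard ``pivot point'' substitution, and then to appeal to the abstract theory of $\id$-almost Cauchy equations developed in Ger (1975) and Kuczma (2009, Ch.~XVII \S5) in order to extend the resulting solution to a genuine additive function on all of $\R$.

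I would begin by applying the hypothesis $T(M)\in\Omega(\id)$ to the unimodular swap $T(x,y)=(y,x)$. Together with $M\in\Omega(\id)$ itself, this ensures that the vertical sections $M[x]$ and the horizontal sections $M^y:=\{x:(x,y)\in M\}$ both lie in $\id$ for $\id$-almost all $x$, respectively $y$. Since $\id$ is proper, I can pick $y_1$, and then $x_1$, in $(0,\infty)$ such that $M^{y_1},\,M[x_1]\in\id$ and $(x_1,y_1)\notin M$. Fixing $x=x_1$ (respectively $y=y_1$) in \eqref{Pexider} then yields
$$
h(y)=f(x_1+y)-g(x_1)\quad\text{on } (0,\infty)\setminus M[x_1],\qquad g(x)=f(x+y_1)-h(y_1)\quad\text{on } (0,\infty)\setminus M^{y_1},
$$
together with the compatibility $f(x_1+y_1)=g(x_1)+h(y_1)$. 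Substituting these formulas back into \eqref{Pexider} gives, for $(x,y)$ outside a set in $\Omega(\id)$,
$$
f(x+y)=f(x+y_1)+f(x_1+y)-f(x_1+y_1).
$$
Defining $A(w):=f(w+x_1+y_1)-f(x_1+y_1)$ for $w>-(x_1+y_1)$ and substituting $x=u+x_1$, $y=v+y_1$, this identity becomes the $\id$-almost everywhere Cauchy equation $A(u+v)=A(u)+A(v)$ for $(u,v)\in(0,\infty)^2$ outside a member of $\Omega(\id)$.

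The main technical step, and the place where I expect the real work to lie, is to promote this $\id$-almost everywhere Cauchy equation on a quarter-plane to the existence of a unique additive function $\tilde{A}:\R\to\R$ with $\tilde{A}=A$ on $(0,\infty)$ modulo $\id$. For the Lebesgue $\sigma$-ideal this is classical, and in the present abstract setting it is exactly what the $\Omega(\id)$-machinery of Ger (1975) and Kuczma (2009, Ch.~XVII \S5) is designed to deliver. The unimodular-invariance hypothesis on $M$ is what supplies the symmetric Fubini-type information required by that machinery, both in the selection of $x_1,y_1$ above and in the extension argument itself.

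Once $\tilde{A}$ is in hand, plugging it back into the formulas from the previous step yields $g(x)=\tilde{A}(x)+\alpha$ and $h(y)=\tilde{A}(y)+\beta$ for $\id$-almost all $x,y$, with $\alpha:=g(x_1)-\tilde{A}(x_1)$ and $\beta:=h(y_1)-\tilde{A}(y_1)$; equation \eqref{Pexider} then forces $f(x)=\tilde{A}(x)+\alpha+\beta$ for $\id$-almost all $x\in(0,\infty)$. Uniqueness of $\tilde{A}$ is precisely the content of Lemma 5 above: the difference of two competing additive functions would itself be additive and vanish on $(0,\infty)\setminus E$ for some $E\in\id$, hence on all of $\R$.
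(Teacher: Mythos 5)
Your reduction (pivot substitution, a.e.\ Cauchy equation for $A(w)=f(w+x_1+y_1)-f(x_1+y_1)$, appeal to the de Bruijn--Ger theory, recovery of $f,g,h$) runs parallel to the first half of the paper's proof, and your uniqueness argument via Lemma 5 is correct. But there is a genuine gap in the final step. The machinery you invoke gives $\tilde{A}=A$ only $\id$-almost everywhere on $(0,\infty)$; translating back through the definition of $A$, this yields $f(t)=\tilde{A}(t)+\alpha+\beta$ only for $\id$-almost all $t\in(x_1+y_1,\infty)$, hence $g(x)=f(x+y_1)-h(y_1)=\tilde{A}(x)+\alpha$ only for $\id$-almost all $x\in(x_1,\infty)$, and similarly $h$ only on $(y_1,\infty)$. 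Since $x_1,y_1$ are fixed positive numbers, this is strictly weaker than the claimed conclusion ``for $\id$-almost all $x\in(0,\infty)$'': by Remark 1 an interval $(0,x_1]$ can never belong to $\id$, so the missing piece cannot be swept into the exceptional set. This is exactly the issue to which the paper devotes the entire second half of its proof: it chooses the shift arbitrarily small ($x_0=1/n$), obtains additive maps $A_n$ and constants $\alpha_n,\beta_n$ with the representations valid $\id$-a.e.\ on $(1/n,\infty)$, and then must identify $A_n=A_m$ and the constants across $n$. That identification is not automatic: a priori $A_n-A_m$ is only \emph{constant} ($\id$-a.e.) on a half-line, and the paper needs the trick of evaluating at all integer multiples $kt$ (with the exceptional set $\bigcup_k \frac{1}{k}E$) before Lemma 5 can be applied to force $A_n=A_m$ everywhere. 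Your proposal contains no counterpart of this step. It is fixable --- for instance one can bootstrap from \eqref{Pexider} itself: for $\id$-almost every small $x$ (those with $M[x]\in\id$) choose a large $y$ outside $M[x]$ and outside the bad sets for $h$ and $f$ (possible since no member of $\id$ contains a half-line), and write $g(x)=f(x+y)-h(y)=\tilde{A}(x)+\alpha$ --- but some argument of this kind must be supplied.

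A secondary weakness: your ``main technical step'' --- promoting the $\id$-a.e.\ Cauchy equation on the quarter-plane to a genuine additive function on $\R$ --- is deferred wholesale to Ger (1975) and Kuczma (2009). The results actually available there and in de Bruijn (1966)/Ger (1978) concern the Cauchy equation holding $\Omega(\id)$-almost everywhere on all of $\R^2$; the passage from the quarter-plane to the full plane is not contained in them. The paper performs it explicitly via the odd extension $\Phi$ of $F$, and this is precisely where the hypothesis $T(M)\in\Omega(\id)$ for unimodular $T$ beyond the swap is consumed (the transformations $T_2(x,y)=(x+y,-y)$ and $T_3(x,y)=(-x-y,x)$ of Remark 2, needed to control the mixed-sign quadrants). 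You correctly flag this as the locus of the real work and correctly identify the role of the unimodular hypothesis, but as written it is an appeal to machinery that does not exist in quite the form you need.
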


\begin{proof}
\par Proceeding like in the proof of Theorem 8 from  Ger's paper (1975) we derive the existence of a postive constant $x_0$ (by means of Remark 1, being as small as required) and real constant $y_0$ such that the function
$$ F(x): = f(x+2x_0) + y_0\,, \quad x \in (0, \infty), $$
satisfies  the Cauchy functional equation $\Omega(\id)$-almost everywhere, i.e. there exists a set $N \in\Omega(\id)$ such that
$$ F(x+y) = F(x) + F(y) \quad {\rm for\,\, all \,\, pairs} \quad  (x,y) \in (0,\infty)^2 \setminus N\,.$$
Moreover, we have also
$$ g(x) = f(x+x_0) + y_1 \quad {\rm and} \quad h(x) = f(x+x_0) + y_2 \qquad  {\rm for}\,\,\,  \id\!\!-\!\!{\rm almost \,\, all} \quad x \in (0, \infty),$$
with some real constants $y_1, y_2$.
It is not hard to check (somewhat tedious but easy calculations using the unimodular images of $M$) that then the function $\Phi: \R \longrightarrow \R$ given by the formula
\begin{eqnarray*}
\Phi(x) : = \left\{\begin{array}{lll}
            F(x) & \mbox{whenever $x \in (0, \infty)$ }\\
            0 & \mbox{for $x = 0$}\\
          - F(-x) & \mbox{whenever $x \in (- \infty, 0)$ }
\end{array} \right.
\end{eqnarray*}
admits a member $N_0$ of the family $\Omega(\id)$ such that
$$ \Phi (x+y) =\Phi (x) + \Phi (y) \quad {\rm for\,\, all \,\, pairs} \quad  (x,y) \in \R^2 \setminus N_0\,.$$
An appeal to the main result of de Bruijn (1966) (see also  Ger (1978) where the notation coincides with that used in the present paper)  gives the existence of exactly one additive function $A: \R \longrightarrow \R$  such that
$$\Phi (x) = A(x) \, \quad  {\rm for}\,\,\,  \id\!\!-\!\!{\rm almost \,\, all} \,\,  x \in \R\,.$$
Consequently, there exists a set $E(x_0) \in  \id $ and some real constants $\alpha_0, \beta_0$ such that
$$ f(x + 2x_0) = A(x) + \alpha_0 + \beta_0, \quad g(x+ x_0) = A(x) + \beta_0 \quad {\rm and} \quad h(x+x_0) = A(x) + \beta_0$$
for all $x \in (0, \infty) \setminus E(x_0)$. Hence
$$ f(t) = A(t) + \alpha + \beta \quad {\rm for} \quad t \in (2x_0, \infty) \setminus \tilde{ E}(x_0), \quad g(t) = A(t) + \beta \quad {\rm for} \quad t \in (x_0, \infty) \setminus \tilde{ E}(x_0)$$
$${\rm and} \quad h(t) = A(t) + \beta \quad {\rm for} \quad t \in (x_0, \infty) \setminus \tilde{ E}(x_0),$$
where we have put $ \tilde{ E}(x_0):=( E(x_0) + 2x_0) \cup ( E(x_0) + x_0) \in \id$\, and \, $\alpha: =  \alpha_0 - A(x_0), \\ \beta: = \beta_0 - A(x_0).$
Since, as it was told earlier, the point $x_0$ might be chosen as small as we wish, for every positive integer $n$ there exist:\, an additive map $A_n: \R \longrightarrow \R$, a set $E_n \in \id $ \, and  real constants $\alpha_n, \beta_n$\, such that  for each $ t \in (\frac{1}{n}, \infty) \setminus E_n$ one has
$$ f(t) = A_n(t) + \alpha_n+ \beta_n,  \quad  g(t) = A_n(t) + \alpha_n \quad {\rm and} \quad h(t) = A_n(t) + \beta_n\,.$$
Fix arbitrarily positive integers $n, m, \, n < m,$\, to get
$$ A_n(t) + \alpha_n =  A_m(t) = \alpha_m \quad {\rm for\,\, all} \quad t \in  (\frac{1}{n}, \infty) \setminus E\,,$$
where we have put
$$ E: = \bigcup_{n=1}^{\infty} E_n \in \id \,.$$
Now, fix a $t \in (\frac{1}{n}, \infty) \setminus \tilde{E}$ \, with \, $\tilde{E}:= \bigcup_{k=1}^{\infty} \frac{1}{k} E \in \id \,;$ then $kt \in (\frac{1}{n}, \infty) \setminus E$\, for every positive integer $k$,\, whence
$$ k A_n(t) + \alpha_ n = A_n(kt) + \alpha_n = A_m(kt) + \alpha_m = kA_m(t) + \alpha_m$$
and, a fortiori,
$$A_n(t) = A_m(t) \quad {\rm for\,\, all} \quad t \in  (\frac{1}{n}, \infty) \setminus \tilde{E}\,,$$
Therefore, the additive function $A_n - A_m$ vanishes $\id$-almost everywhere on the halfline $ (\frac{1}{n}, \infty)$ whence in view of Lemma 5,  $A_n = A_m = : A$ does not depend on $n$ as well as the constants $\alpha_n = : \alpha$ and $\beta_n =: \beta.$
This forces the equalities
$$ f(t) = A(t) + \alpha+ \beta,  \quad  g(t) = A(t) + \alpha \quad {\rm and} \quad h(t) = A(t) + \beta\,$$
to be valid for every $ t \in (\frac{1}{n}, \infty) \setminus E, \, n \in \N,$
which completes the proof.
\end{proof}

A careful inspection of the proof of Theorem \ref{non} \,as well as that of Proposition \ref{eq1} ensures that dealing with the p.l.i. $\sigma$-ideal of all sets of Lebesgue measure zero in $\R$ we were using exclusively these properties of that set family which are axiomatically guaranteed in the definition of an abstract p.l.i. ideal. Therefore we terminate this paper with the statement of the following generalization of Theorem \ref{non}.

\begin{theorem}\label{last}
Given a p.l.i. $\sigma$-ideal  $\id$ in $\R$
assume that functions $a,b,c, d: (0, \infty) \longrightarrow \R$ satisfy the functional equation {\rm (11)}
for some member $M$ of the family $\Omega(\id)$.
Then in the case where the set  $T(M)$ falls into $ \Omega(\id)$ for every unimodular transformation $T$ of the real plane, there exist: an additive  function
$A: (0, \infty) \longrightarrow \R$, logarithmic type functions $L_a,  L_b : (0, \infty) \longrightarrow \R$  and real constants $\alpha, \beta, \gamma$
 such that
$$a(x) = A(x) + L_a(x) + \alpha, \quad b(x) = A(x) + L_b(x) + \beta,  \quad c(x) = A(x) + L_a(x) + L_b(x) + \gamma $$
and
$$ d(x) = L_a\left(\frac{x}{x+1}\right) - L_b(x+1) + \alpha + \beta - \gamma$$
for $\id$-almost all $x \in (0, \infty)$.
\end{theorem}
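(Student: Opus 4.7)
The plan is to re-execute the proof of Theorem \ref{non} (and the first portion of Proposition \ref{eq1} that it invokes) line by line, everywhere replacing ``set of $l_1$-measure zero'' by ``set in $\id$'' and ``set of $l_2$-measure zero'' by ``set in $\Omega(\id)$''. The paragraph preceding Theorem \ref{last} already signals that this substitution should go through: the only two nontrivial measure-theoretic inputs actually used in Sections 3--4 are (i) the Pexider theorem on the exceptional complement, which has already been generalized to arbitrary p.l.i.\ $\sigma$-ideals as Theorem \ref{ideal}, and (ii) the vanishing of an additive function off a halfline, which is exactly Lemma 5. The $\sigma$-additivity of the ideal absorbs the countable unions that appear when one iterates over the parameter $r$, and Remark 1 takes the place of the fact that Lebesgue-null sets are boundary.

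The first concrete step is the dilation trick of Proposition \ref{eq1}: for each $r>0$, subtract the $(rx,ry)$-version of \eqref{cpl1} from the $(x,y)$-version to obtain the Pexider equation $a_r(x)+b_r(y)=c_r(x+y)$ off the set $M\cup \tfrac{1}{r} M$, where $a_r(x)=a(rx)-a(x)$ and so on. Linear invariance of $\id$ applied sectionwise ensures $\tfrac{1}{r} M\in\Omega(\id)$, and its unimodular images remain in $\Omega(\id)$ because those of $M$ do by assumption; hence Theorem \ref{ideal} applies and delivers additive functions $A_r$ together with constants $\alpha(r),\beta(r)$ such that $a_r=A_r+\alpha(r)$, $b_r=A_r+\beta(r)$, $c_r=A_r+\alpha(r)+\beta(r)$ on complements of members of $\id$. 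Iterating through the cocycle identity $a_{xy}(z)=a_y(xz)+a_x(z)$ and its symmetric counterpart, and taking countable unions of the resulting exceptional sets (permitted because $\id$ is a $\sigma$-ideal), one identifies a single additive function $A$, a logarithmic-type function $L_a$, and a semi-constant remainder $h_a$ with $a=A+L_a+h_a$ modulo $\id$; analogous decompositions yield $b=A+L_b+h_b$ and $c=A+L_a+L_b+h_c$.

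The second concrete step duplicates the argument following ``Repeating the first part\dots'' in the proof of Theorem \ref{non}. Substituting the decompositions back into \eqref{cpl1} and performing the change of variable $z=x/y$, whose coordinate sections are controlled by linear invariance of $\id$ and by the unimodular transposition hypothesis on $M$, one isolates $d(z)=L_a(z/(z+1))-L_b(z+1)+\bigl(h_a(zy)+h_b(y)-h_c((z+1)y)\bigr)$ on a set in $\Omega(\id)$. Semi-constancy of $h_a,h_b,h_c$ together with fixing a generic $z$ force the bracketed quantity to be $\id$-almost a constant, say $\delta$, so $d$ has the asserted form, and the surviving identity $h_a(x)+h_b(y)=h_c(x+y)+\delta$ becomes a Pexider equation over $\Omega(\id)$. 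Applying Theorem \ref{ideal} a second time produces an additive $\tilde{A}$ together with real constants, and the identity $h_a(2y)=h_a(y)$ inherent in semi-constancy combined with Lemma 5 annihilates $\tilde{A}$; thus the three $h$'s are constants modulo $\id$, completing the proof.

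The principal obstacle is bookkeeping: every transformation of coordinates used in Sections 3--4 (sectioning, scaling by $r$, transposing $(x,y)$, and the half-linear substitution $(x,y)\mapsto(x/y,y)$) must send exceptional sets to members of $\id$ or $\Omega(\id)$. The $\sigma$-ideal axioms, the linear invariance of $\id$, and the assumption that every unimodular $T$ carries $M$ back into $\Omega(\id)$ are tailored exactly for these manipulations, so once the verification is carried out in one or two representative cases the remaining ones are entirely analogous and the conclusion follows with precisely the shape claimed.
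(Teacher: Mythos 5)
Your proposal is correct and follows essentially the same route as the paper: the paper's own ``proof'' of Theorem \ref{last} consists precisely of the remark that the arguments for Proposition \ref{eq1} and Theorem \ref{non} use only the axiomatic properties of a p.l.i.\ $\sigma$-ideal, with Theorem \ref{ideal} and Lemma 5 as the two external inputs, which is exactly the substitution scheme you carry out (in rather more detail than the paper itself). The one bookkeeping point worth writing out explicitly is the claim that unimodular images of $\tfrac{1}{r}M$ lie in $\Omega(\id)$: this follows because the relevant unimodular maps (e.g.\ $T_1,T_2,T_3$ of Remark 2) are linear and hence commute with scalar dilations, so $T\bigl(\tfrac{1}{r}M\bigr)=\tfrac{1}{r}T(M)$, and dilations preserve $\Omega(\id)$ by the sectionwise linear-invariance argument you give.
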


\bigskip

Noteworthy seems to be the following final

\begin{remark}
Instead of all possible unimodular transformations of the plane, spoken of in both Theorem \ref{ideal}\, and\,  Theorem \ref{last}, \,it would suffice to consider only three specific ones:\,  $T_1(x,y) = (y,x), T_2(x,y) = (x+y, -y),\, T_3(x,y) = (-x-y, x), \, (x,y) \in \R^2.$
\end{remark}

\small
{\bf References}
\begin{enumerate}

\item {\sc Baker, J.A.}, On the functional equation
$f(x)g(y)=p(x+y)q(x/y)$. {\em Aeq. Math.} {\bf 14}  (1976), 493-506.

\item {\sc Bobecka, K., Weso\l owski, J.}, The
Lukacs-Olkin-Rubin theorem without invariance of the "quotient".
{\em Studia Math.} {\bf 152(2)}  (2002), 147-160.

\item {\sc Bobecka, K., Weso\l owski, J.},  Multivariate
Lukacs theorem. {\em J. Multivar. Anal.} {\bf 91}  (2004),
143-160.

\item {\sc Boutouria, I.}, Characterization of the Wishart
distributions on homogeneous cones. {\em Com. Ren. Math.} {\bf 341
(1)}  (2004), 43-48.

\item {\sc Boutouria, I.}, Characterization of the Wishart
distributions on homogeneous cones in the Bobecka and Wesolowski
way. {\em Comm. Statist. Th. Meth.} {\bf 13(15)} (2004),
2552-2566.

\item {\sc Boutoria, I., Hassairi, A., Massam, H.}, Extension of the Olkin and Rubin characterization of the Wishart dsitribution on homogeneous cones.
{\em Inf. Dim. Anal. Quant. Probab. Rel. Top.} {\bf 14(4)} (2011), 591-611.

\item {\sc Casalis, M., Letac, G.}, The Lukacs-Olkin-Rubin
characterization of Wishart distribtuions on symmetric cones. {\em
Ann. Statist.} {\bf 24} (1996), 763-786.

\item {\sc de Bruijn, N.G.}, On almost additive functions,
{\em Colloq. Math.} {\bf 15} (1966), 59-63.

\item {\sc Dar\'oczy, Z., Lajk\'o, K., Sz\'ekelyhidi, L.},
Functional equations on ordered fields. {\em Publ. Math. Debrecen}
{\bf 24}  (1977), 173-179.

\item {\sc Ger, R.},  On some functional equations with a
restricted domain. {\em Fund. Math.} {\bf 89} (1975), 131-149.

\item {\sc Ger, R.},  Note on almost additive functions.
 {\em Aeq. Math.} {\bf 17} (1978), 73-76.

\item {\sc Hassairi, A., Lajmi, S, Zine, R.}, A characterization of
the Riesz distribution. {\em J. Theor. Probab.} {\bf 21(4)} (2008), 773-790.

\item {\sc Ko\l odziejek, B.}, {\em The Wishart distribution on the Lorentz cone}, Fac. Math. Infor. Sci., Warsaw Univ. Tech.. MSc Thesis, Warsaw, 2010  - in Polish.

\item {\sc Kominek, Z.}, On non-measurable subsets of the real line. {\em Folia Math.} {\bf 67} (2011), 63-66.

\item {\sc Kuczma, M.}, {\it An Introduction to the Theory of Functional
Equations and Inequalities}, Birkh${\rm \ddot{a}}$user Verlag, Basel-Boston-Berlin, 2009.

\item {\sc Lajk\'o, K.}, Remark to a paper by J. A. Baker.
{\em Aeq. Math.} {\bf 19}  (1979), 227-231.

\item {\sc Lajk\'o, K., M\'esz\'aros, F.}, Multiplicative type functional equations arising from characterization problems. {\em Aeq. Math.}  (2012), DOI: 10.1007/s00010-012-0117-2.

\item {\sc Lukacs, E.}, A characterization of the gamma
distribution. {\em Ann. Math. Statist.} {\bf 26} (1955), 319-324.

\item {\sc M\'esz\'aros, F.}, A functional equation and its application to the characterization of gamma distribution. {\em Aeq. Math.} {\bf 79} (2010), 53-59.

\item {\sc Olkin, I.}, Problem (P128). {\em Aeq. Math.} {\bf 12} (1975), 290-292.

\item {\sc Olkin, I., Rubin, H.}, A characterization of the
Wishart distribution. {\em Ann. Math. Statis.} {\bf 33} (1962),
1272-1280.

\item {\sc Weso\l owski, J.}, Multiplicative Cauchy functional equation and the equation of proportion on the Lorentz
cone. {\em Studia Math.} {\bf 179(3)} (2007), 263-275.

\end{enumerate}
\end{document}